\titleformat*{\section}{\Large\bfseries}
\titleformat*{\subsection}{\large\bfseries}
\titleformat*{\subsubsection}{\large\bfseries}
\titleformat*{\paragraph}{\large\bfseries}
\titleformat*{\subparagraph}{\large\bfseries}
\newtheorem{teo}{Theorem}
\newtheorem{lema}[teo]{Lemma}
\newtheorem{prop}[teo]{Proposition}
\newtheoremstyle{mytheoremstyle} 
{\topsep}                    
{\topsep}                    
{}                   
{}                           
{\scshape}                   
{.}                          
{.5em}                       
{}  
\theoremstyle{mytheoremstyle} 
\theoremstyle{mytheoremstyle} 
\numberwithin{equation}{section}
\newcommand{\real}{\mathbb{R}}
\newcommand{\nat}{\mathbb{N}}
\newcommand \ben {\begin{equation}}
\newcommand \een {\end{equation}}
\newcommand \be {\begin{equation*}}
\newcommand \ee {\end{equation*}}
\newcommand \bi {\begin{itemize}}
\newcommand \ei {\end{itemize}}
\newcommand{\proj}{\mathbb{P}}
\DeclareMathOperator*{\Diver}{div}
\title{\textbf{Existence and stability of spatial plane waves for the incompressible Navier-Stokes in $\real^3$}}
\author{Simão Correia and Mário Figueira}
\begin{document}
\maketitle
\begin{abstract}
We consider the three-dimensional incompressible Navier-Stokes equation on the whole space. We observe that this system admits a $L^\infty$ family of global spatial plane wave solutions, which are connected with the two-dimensional equation. We then proceed to prove local well-posedness over a space which includes $L^3(\real^3)$ and these solutions. Finally, we prove $L^3$-stability of spatial plane waves, with no condition on their size.
\vskip10pt
\noindent\textbf{Keywords}: incompressible Navier-Stokes; local well-posedness; stability; spatial plane waves.
\vskip10pt
\noindent\textbf{AMS Subject Classification 2010}: 35B35, 35Q30, 76D03.
\end{abstract}
\section{Introduction}
In this work, we consider the Cauchy problem for the incompressible Navier-Stokes equation on $\real^d$,
\begin{equation}\label{NS}\tag{NS}
\left\{\begin{array}{l}
u_t - \Delta u + (u\cdot\nabla)u = \nabla p, \quad t>0,\ u(t,\cdot):\real^d\to \real^d\\
\Diver u =0\\
u(0)=u_0
\end{array}\right.
\end{equation}
We shall focus on the $d=3$ case. We seek to study spatial plane waves, that is, 
\begin{equation}\label{ansatz}
u(t,x,y,z)=g(t,x-cy,z),\ p(t,x,y,z)=q(t,x-cy,z),\ u_0(x,y,z)=g_0(x-cy,z),\ c\in\real.
\end{equation}
We shall refer to $c$ as the speed of the wave and $g,q$ as the wave profiles. The idea of considering such solutions first appeared in \cite{ms1} in the context of the hyperbolic nonlinear Schrödinger equation. Since the existence of such solutions is quite trivial in such a framework, the attention was then directed to the local well-posedness over a space which includes $H^1$ functions and spatial plane waves. Finally, it was proven, in some cases, that $H^1$ perturbations of spatial plane waves are stable. These ideas were later developed for the nonlinear Schrödinger equation (see \cite{ms2}), where one considers superpositions of waves with different speeds (either a numerable collection or a continuous one).

The generality of such results made us search for other models where one could try to apply these ideas, such as the \eqref{NS}. A considerable change in the framework is observed: on one hand, the change from a dispersive equation to a diffusive one; on the other, the passage from a scalar equation to a system. Moreover, there is an intrinsic interest in obtaining existence and stability results for the three-dimensional (NS) (and so this work is not simply an academic problem).

The existence of a class of global spatial plane waves is proven by observing that the profile satisfies a two-dimensional (NS) system (cf. Proposition 4). Naturally, these solutions will not belong to $L^p(\real^3)$ for any $p<\infty$. However, under some regularity assumptions over the initial data $g_0$, they will belong to $L^\infty(\real^3)$. We then derive a local well.posedness result over a space which includes these global solutions and $L^3(\real^3)$ (cf. Theorem \ref{lwpE}). Finally, we prove the stability of spatial plane waves under $L^3(\real^3)$ perturbations, without any smallness condition of the profile of the wave. As a consequence, our result proves that, if a flow in $\real^3$ is horizontal (that is, it has no vertical components and is independent of the height), then localized perturbations of such a flow give rise to global flows, which remain close to the horizontal one.

This paper is organized as follows: in a first section, we recall some well-known classical results. We obtain a family of global spatial plane waves in section 3. The main results are then stated and proved in section 4. Finally, we make some comments regarding superpositions of spatial plane waves and the extension of these results to a very similar evolution problem, the complex Ginzburg-Landau equation.

\section{Preliminaries}
We shall denote by $\mathbb{P}$ the projection operator over the set of divergence-free functions, which is defined on any $L^p$, $1<p<\infty$. Define $A=-\Delta\mathbb{P}=-\mathbb{P}\Delta$. Recall the basic estimates for the projected heat kernel $e^{tA}$, valid for any $1< q\le p< \infty$:
\begin{equation}\label{estimativalinear1}
\|e^{tA}u\|_p\lesssim t^{-\frac{1}{2}\left(\frac{d}{q}-\frac{d}{p}\right)}\|u\|_q,
\end{equation}
\begin{equation}\label{estimativalinear2}
\|\nabla e^{tA}u\|_p\lesssim t^{-\frac{1}{2}\left(1+\frac{d}{q}-\frac{d}{p}\right)}\|u\|_q.
\end{equation}
Throughout this work, we use Kato's definition for mild solutions (see \cite{kato}, \cite{katofujita}): a solution is any function $u\in C([0,T); \proj\left(L^p(\real^d)\right))$, for some $0<t\le\infty$ and $1< p< \infty$, which satisfies the integral equation
\begin{equation}\label{duhamel}
u(t)=e^{t\Delta}u_0 - \int_0^t e^{(t-s)A}\proj\left((u\cdot\nabla)u\right)(s)ds.
\end{equation}

We now recall some important results in the classical theory for the (NS).

\begin{teo}[see \cite{kato}]\label{kato}
Let $u_0\in \proj L^d(\real^d)$. Then there exists $T>0$ and a unique solution  $u\in C([0,T); \proj\left(L^d(\real^d)\right))$ of \eqref{NS}. It satisfies
\begin{equation}\label{decayu}
t^{(1-d/p)/2}u\in BC([0,T),\proj L^p(\real^d)), d\le p\le \infty,
\end{equation}
\begin{equation}\label{decaygradu}
t^{(1-d/2p)}\nabla u\in BC([0,T),\proj (L^p(\real^d))^d), d\le p< \infty,
\end{equation}
$$
\sup_{t>0, 3\le p\le \infty}\left\{t^{(1-d/p)/2}\|u(t)\|_p\right\}\lesssim 2\|u_0\|_d.
$$
Furthermore, there exists $\epsilon>0$ such that, if $\|u_0\|_{d}\le \epsilon$, the solution $u$ is global (that is, $T=+\infty$).
\end{teo}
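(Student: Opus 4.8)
The plan is to recast \eqref{NS} in its Duhamel form \eqref{duhamel} as a fixed-point problem $u=\Phi(u)$, with
\[
\Phi(u)(t)=e^{t\Delta}u_0-B(u,u)(t),\qquad B(u,v)(t)=\int_0^t e^{(t-s)A}\proj\big((u\cdot\nabla)v\big)(s)\,ds,
\]
and to solve it by the contraction mapping principle in a scaling-critical Kato space. The decisive structural observation is that, since $\Diver u=0$, one has $(u\cdot\nabla)u=\Diver(u\otimes u)$; writing $B$ in this divergence form transfers one derivative onto the semigroup inside the integral, where it is absorbed by \eqref{estimativalinear2}. This is what lets the bilinear term be controlled purely in terms of $L^p$ norms of $u$, with no recourse to $\nabla u$. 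I would work in the space
\[
X_T=\Big\{u:\ \|u\|_{X_T}:=\sup_{0<t<T}t^{(1-d/p)/2}\|u(t)\|_p<\infty\Big\},
\]
for a fixed auxiliary exponent $p$ with $d<p<\infty$, the weight $(1-d/p)/2$ being precisely the one dictated by the invariance of \eqref{NS} under the $L^d$-critical scaling.

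First I would record the linear bound: applying \eqref{estimativalinear1} with $q=d$ gives $\|e^{t\Delta}u_0\|_{X_T}\lesssim\|u_0\|_d$, and, crucially, a density argument (approximating $u_0$ in $L^d$ by smooth, compactly supported functions) shows that the quantity $\eta(T):=\|e^{t\Delta}u_0\|_{X_T}$ tends to $0$ as $T\to0$ for each fixed $u_0\in\proj L^d$. Next comes the bilinear estimate $\|B(u,v)\|_{X_T}\le C\|u\|_{X_T}\|v\|_{X_T}$ with $C$ independent of $T$. To obtain it I would take $q=p/2$, so that $\|u\otimes v\|_{q}\le\|u\|_p\|v\|_p\le\|u\|_{X_T}\|v\|_{X_T}\,s^{-(1-d/p)}$ by H\"older, apply \eqref{estimativalinear2}, and bound the time integral by
\[
\int_0^t(t-s)^{-a}s^{-b}\,ds=t^{1-a-b}\!\int_0^1(1-\sigma)^{-a}\sigma^{-b}\,d\sigma,
\]
with $a=\tfrac12(1+d/p)$ and $b=1-d/p$. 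For $p>d$ both $a<1$ and $b<1$, so the $\sigma$-integral converges, and one checks that $1-a-b=-\tfrac12(1-d/p)$, i.e.\ the residual power of $t$ is exactly the critical weight; this is where the criticality of $L^d$ makes the estimate close.

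With these two ingredients the standard quadratic fixed-point lemma applies: whenever $4C\eta(T)<1$, the map $\Phi$ admits a unique fixed point in the ball of $X_T$ of radius $2\eta(T)$, which is the desired mild solution. For arbitrary $u_0\in\proj L^d$ this threshold is met for $T$ small enough, since $\eta(T)\to0$, yielding local existence and uniqueness in that ball; if $\|u_0\|_d\le\epsilon$ with $\epsilon$ small, then $\eta(\infty)\lesssim\|u_0\|_d$ already lies below the threshold, so one may take $T=+\infty$ and obtain a global solution. I would then recover the remaining conclusions by bootstrapping: feeding the solution back into \eqref{duhamel} and reapplying \eqref{estimativalinear1}--\eqref{estimativalinear2} with varying exponents upgrades the bounds to \eqref{decayu} and \eqref{decaygradu} across the range $d\le p\le\infty$, while the fixed-point radius yields the quantitative estimate $\sup t^{(1-d/p)/2}\|u(t)\|_p\lesssim 2\|u_0\|_d$. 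Strong continuity $u\in C([0,T);\proj L^d)$ up to $t=0$, together with uniqueness in the full Kato class, is then established separately by the same approximation argument.

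I expect the bilinear estimate to be the main obstacle: one must simultaneously integrate the time singularity produced by the gradient in \eqref{estimativalinear2} and match the critical weight, which is a genuine constraint fixing the admissible range of the auxiliary exponent $p$. A secondary, more technical difficulty is the strong continuity into $L^d$ at the initial time, which is invisible to the weighted norm of $X_T$ and must be recovered by density.
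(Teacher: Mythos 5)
This statement is quoted by the paper as a classical result (Kato's 1984 theorem, cited as ``see \cite{kato}''); the paper itself gives no proof of it, so there is no internal proof to compare against. Your proposal is, as far as I can check, a correct sketch of the standard argument, and the details close: with $q=p/2$ and $p>d$ the exponents $a=\tfrac12(1+d/p)$, $b=1-d/p$ are both below $1$ and $1-a-b=-\tfrac12(1-d/p)$, so the bilinear estimate is $T$-uniform and scaling-critical, and smallness comes from $\eta(T)\to0$ (density) rather than from a power of $T$. Worth noting, though: your route is not quite Kato's. Kato's original proof --- which is the one the paper mirrors when it proves its own Theorem \ref{lwpE} --- keeps the nonlinearity as $(u\cdot\nabla)u$ and runs successive approximations controlling \emph{two} families of norms simultaneously, the weighted $L^p$ norms of $u_n$ and the weighted $L^p$ norms of $\nabla u_n$, using \eqref{estimativalinear2} with the gradient landing on the iterate. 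You instead write $(u\cdot\nabla)u=\Diver(u\otimes u)$ and let the derivative fall on the semigroup, so the contraction involves no derivatives of $u$ at all; this is the streamlined bilinear-operator formulation found, e.g., in \cite{lemarie}, and it is exactly the trick the paper itself uses later in the stability theorem (where the nonlinear terms are rewritten as $\nabla\cdot\left((v+\phi)\otimes(v+\phi)-\phi\otimes\phi\right)$). The trade-off is real but benign: Kato's two-norm iteration delivers \eqref{decaygradu} as a byproduct of the construction, whereas in your scheme the gradient decay and the full range $d\le p\le\infty$ in \eqref{decayu} must be recovered afterwards by feeding the solution back into \eqref{duhamel}, as you correctly indicate. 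One caution on uniqueness: as literally stated, uniqueness in all of $C([0,T);\proj L^d(\real^d))$ is a much deeper result (Furioli--Lemari\'e-Rieusset--Terraneo); what your argument (and Kato's) gives is uniqueness in the Kato class, i.e.\ among solutions whose weighted auxiliary norms vanish as $t\to0$, and your closing density remark is the right ingredient for that, so you should state the uniqueness class explicitly rather than claim it for the full continuity class.
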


\begin{teo}[for e.g., \cite{lemarie}]\label{globexistd2}
Let $u_0\in \proj H^s(\real^2)$, $s\ge 1$. Then there exists a unique solution of (NS) on
$$
C([0,\infty);\proj H^s(\real^2))\cap L^2_{loc}((0,\infty), \proj H^{s+1}(\real^2)).
$$
\end{teo}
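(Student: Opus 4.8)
The plan is to obtain the global solution by first constructing a local one through a fixed-point argument and then propagating it with a priori estimates that are available only in two dimensions. For local existence I would cast \eqref{NS} in the Duhamel form \eqref{duhamel} and solve it by contraction in a ball of $C([0,T];\proj H^s(\real^2))$: the bilinear term $\int_0^t e^{(t-s)A}\proj\,\Diver(u\otimes u)(s)\,\ds$ is controlled with the smoothing estimates \eqref{estimativalinear1}--\eqref{estimativalinear2} together with the product structure of $H^s(\real^2)$, yielding a unique mild solution on some $[0,T)$ whose existence time $T$ depends only on $\|u_0\|_{H^s}$, along with the parabolic gain $u\in L^2_{loc}((0,T);\proj H^{s+1})$. (A Galerkin scheme combined with the estimates below produces the same solution.)

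The heart of the matter is deriving bounds that do not blow up in finite time. Pairing the equation with $u$ and using $\Diver u=0$ to annihilate the pressure and the convective term gives the energy identity
\be
\tfrac12\tfrac{d}{dt}\|u\|_{L^2}^2 + \|\nabla u\|_{L^2}^2 = 0,
\ee
so $\|u(t)\|_{L^2}^2 + 2\int_0^t\|\nabla u\|_{L^2}^2\,\ds = \|u_0\|_{L^2}^2$; in particular $\nabla u\in L^2((0,\infty);L^2)$ with $\int_0^\infty\|\nabla u\|_{L^2}^2\,\ds\le\tfrac12\|u_0\|_{L^2}^2$. Pairing instead with $-\Delta u$ (equivalently, exploiting that the scalar vorticity obeys a transport--diffusion equation with no stretching term) gives
\be
\tfrac12\tfrac{d}{dt}\|\nabla u\|_{L^2}^2 + \|\Delta u\|_{L^2}^2 = -\int (u\cdot\nabla)u\cdot\Delta u\,\dx.
\ee
Here I would invoke the two-dimensional Ladyzhenskaya inequality $\|f\|_{L^4}^2\lesssim\|f\|_{L^2}\|\nabla f\|_{L^2}$ to bound the right-hand side by $\|u\|_{L^2}^{1/2}\|\nabla u\|_{L^2}\|\Delta u\|_{L^2}^{3/2}$, then absorb the top-order factor by Young's inequality to reach
\be
\tfrac{d}{dt}\|\nabla u\|_{L^2}^2 + \|\Delta u\|_{L^2}^2 \lesssim \|u\|_{L^2}^2\,\|\nabla u\|_{L^2}^4.
\ee
Since $\|u\|_{L^2}\le\|u_0\|_{L^2}$, Gronwall's lemma with the time-integrable coefficient $\|u_0\|_{L^2}^2\|\nabla u\|_{L^2}^2$ bounds $\|\nabla u(t)\|_{L^2}$ for all $t$ by a constant depending only on $\|u_0\|_{H^1}$.

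These global $H^1$ bounds, combined with a local existence time depending only on the $H^s$ norm, let the standard continuation argument rule out finite-time blow-up and extend the solution to $[0,\infty)$. For $s>1$ I would bootstrap: applying $\Lambda^s=(-\Delta)^{s/2}$, pairing with $\Lambda^s u$, and using a Kato--Ponce commutator estimate together with the already-established control of $\|\nabla u\|_{L^2}$ produces a differential inequality for $\|u\|_{H^s}^2$ with a time-integrable coefficient, hence global $H^s$ regularity. Uniqueness follows from an energy estimate on the difference $w=u_1-u_2$: the cancellation $\int(u_1\cdot\nabla)w\cdot w\,\dx=0$ leaves $\tfrac12\tfrac{d}{dt}\|w\|_{L^2}^2+\|\nabla w\|_{L^2}^2 = -\int(w\cdot\nabla)u_2\cdot w\,\dx$, which is closed by Ladyzhenskaya and Gronwall with $w(0)=0$.

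The main obstacle, and the single place where dimension two is essential, is closing the enstrophy estimate. In $\real^2$ the convective term can be absorbed into the dissipation precisely because Ladyzhenskaya's inequality yields the subcritical power $\|\Delta u\|_{L^2}^{3/2}$ and because the enstrophy production by the (absent) vortex-stretching term vanishes; both features fail in $\real^3$, where only local-in-time or small-data statements survive. Once this a priori bound is secured, every remaining step is a routine adaptation of the parabolic fixed-point and continuation machinery.
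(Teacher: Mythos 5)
The paper does not prove this theorem at all: it is invoked as a classical result with a citation to Lemarié-Rieusset, and your argument is precisely the standard proof behind that citation --- local Kato-type existence, the energy identity, the two-dimensional enstrophy estimate closed by Ladyzhenskaya's inequality and Gronwall (possible only because there is no vortex stretching in $\real^2$), continuation plus a Kato--Ponce bootstrap for $s>1$, and energy-based uniqueness. Your proposal is correct; the only slip is the sign of the convective term after pairing with $-\Delta u$, which is harmless since only its modulus is estimated.
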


\begin{lema}\label{lema:estimativa}
Let $u_0\in\proj H^s(\real^2)$, $s\ge1$. Then the solution $u$ of (NS) with initial data $u_0$ given by Theorem \ref{globexistd2} satisfies
$$
\|u(t)\|_2\to 0,\quad t\to\infty.
$$
Furthermore, for $t_0$ sufficiently large,
$$
\|u(t)\|_\infty\lesssim \frac{2\|u(t_0)\|_2}{(t-t_0)^{1/2}}.
$$
\end{lema}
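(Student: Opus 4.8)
The plan is to establish the $L^2$ decay first, since the $L^\infty$ estimate will follow from it by restarting the flow and invoking Theorem \ref{kato}.

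First I would record the energy structure. Testing the equation against $u$ and using $\Diver u=0$ (which kills both the pressure term and the convective term in the $L^2$ inner product) gives the energy identity
$$
\tfrac12\frac{d}{dt}\|u(t)\|_2^2=-\|\nabla u(t)\|_2^2,
$$
rigorous on the class of Theorem \ref{globexistd2} because $u\in L^2_{\loc}(H^{s+1})$ justifies the integrations by parts. Hence $t\mapsto\|u(t)\|_2^2$ is non-increasing, so it converges to some $\ell^2\ge0$, and $\int_0^\infty\|\nabla u\|_2^2\,ds<\infty$. The same computation for the vorticity (transported and diffused, with no stretching in $2$D) shows that $\|\nabla u(t)\|_2$ is itself non-increasing, whence $\|\nabla u(t)\|_2\to0$; this is auxiliary, but it is the natural first indication that all the mass must be migrating to low frequencies, which is exactly what must be ruled out.

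To upgrade convergence of the energy to convergence to zero, I would run Schonbek's frequency-splitting scheme. For a radius $R=R(t)$,
$$
\|\nabla u\|_2^2=\int|\xi|^2|\hat u|^2\,d\xi\ \ge\ R^2\Big(\|u\|_2^2-\int_{|\xi|\le R}|\hat u(t,\xi)|^2\,d\xi\Big),
$$
which converts the energy identity into the differential inequality $E'(t)\le -2R^2E(t)+2R^2g(t)$, with $E=\|u\|_2^2$ and $g(t)=\int_{|\xi|\le R(t)}|\hat u|^2$. Choosing $R(t)^2=\beta/t$ and using the integrating factor $t^{2\beta}$, a Cesàro-type computation shows $E(t)\to0$ as soon as $g(t)\to0$. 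To control $g$ I would use the Fourier-side Duhamel formula together with the fact that, for divergence-free $u$, the nonlinearity is a divergence, so $|\widehat{\mathbb P(u\cdot\nabla)u}(s,\xi)|\lesssim|\xi|\,\|u(s)\|_2^2$; the linear contribution $\int_{|\xi|\le R(t)}e^{-2t|\xi|^2}|\hat u_0|^2\,d\xi$ then tends to $0$ by dominated convergence, since $R(t)\to0$ and $\hat u_0\in L^2$.

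The step I expect to be the main obstacle is precisely the nonlinear part of the low-frequency mass $g(t)$: the crude bound $\|u(s)\|_2^2\le\|u_0\|_2^2$ only keeps this contribution bounded, not decaying, so one must feed the (partial) decay of $E$ back into the estimate of the nonlinearity and iterate, i.e. carry out the Schonbek bootstrap. This is where the hypothesis $u_0\in H^s$, and hence the smooth global solution of Theorem \ref{globexistd2}, is needed to license all the manipulations; alternatively one may simply invoke the classical $L^2$-decay theorem for the $2$D Navier--Stokes equation. Once $\|u(t)\|_2\to0$ is established, the $L^\infty$ bound is immediate: choose $t_0$ so large that $\|u(t_0)\|_2\le\epsilon$, with $\epsilon$ the smallness threshold of Theorem \ref{kato} in dimension $d=2$, and restart the flow at $t_0$ with datum $u(t_0)\in\mathbb P L^2(\real^2)$. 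Theorem \ref{kato} then yields a global solution with $\sup_{t>t_0}(t-t_0)^{1/2}\|u(t)\|_\infty\lesssim 2\|u(t_0)\|_2$, which is exactly the asserted estimate; uniqueness (the $H^s$-solution, being continuous into $H^s\hookrightarrow L^2$, is a mild $L^2$-solution on $[t_0,\infty)$ with datum $u(t_0)$) identifies this solution with $u$, so the bound holds for $u$ itself.
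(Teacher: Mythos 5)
Your proof is correct, and its second half is exactly the paper's argument: pick $t_0$ so large that $\|u(t_0)\|_2$ falls below the smallness threshold $\epsilon$ of Theorem \ref{kato} (with $d=2$, so $L^d=L^2$), restart the flow at $t_0$, and read off $\|u(t)\|_\infty\lesssim 2\|u(t_0)\|_2(t-t_0)^{-1/2}$ from the decay estimates of that theorem. Where you genuinely diverge is the first half: the paper obtains $\|u(t)\|_2\to 0$ in one line by citing Wiegner's theorem (Theorem 26.1 of \cite{lemarie}), whereas you sketch a self-contained proof via the energy identity and Schonbek's Fourier splitting. Your sketch is faithful to how such decay results are actually proved, and you correctly isolate the real obstruction in dimension two: with only the crude bound $\|u(s)\|_2\le\|u_0\|_2$, the nonlinear low-frequency contribution is of order $R(t)^4t^2=O(1)$ under the choice $R(t)^2=\beta/t$, so the method requires a bootstrap (this is precisely why the two-dimensional case, due to Masuda, Kajikiya--Miyakawa and Wiegner, is more delicate than $d\ge 3$). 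You leave that bootstrap unexecuted, but since you explicitly allow the fallback of invoking the classical $L^2$-decay theorem --- which is the paper's move --- there is no gap, only an optional self-contained route that trades brevity for independence from the cited black box. Two of your side remarks are worth keeping in any write-up, as the paper glosses over them: the justification of the energy identity from the regularity $u\in L^2_{\loc}((0,\infty),\proj H^{s+1}(\real^2))$ supplied by Theorem \ref{globexistd2}, and the identification, via uniqueness of mild $L^2$ solutions, of the restarted Kato solution with the original solution $u$ on $[t_0,\infty)$ --- without this last point the $L^\infty$ bound would a priori hold only for \emph{some} solution emanating from $u(t_0)$, not for $u$ itself.
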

\begin{proof}
From Wiegner's theorem (see, for example \cite[Theorem 26.1]{lemarie}), we know that $\|u(t)\|_2\to 0$ as $t\to\infty$. Hence, for $t_0$ sufficiently large, $u(t_0)$ satisfies the conditions of Theorem \ref{kato}. 
\end{proof}


\section{Existence of spatial plane waves}

Introducing the spatial plane wave ansatz \eqref{ansatz} into the three dimensional (NS) and setting
\begin{align*}
h_1(t,w,z)&=\frac{1}{\sqrt{1+c^2}}\left(g_1(t,\sqrt{1+c^2}w,z)-cg_2(t,\sqrt{1+c^2}w,z)\right), \\h_2(t,w,z)&=g_3(t,\sqrt{1+c^2}w,z),\quad \rho(t,w,z)=q(t,\sqrt{1+c^2}w,z),
\end{align*}
it is easy to check that $h,=(h_1,h_2)$ is a solution of the two-dimensional (NS) with pressure $\rho$ and initial data
\begin{align*}
(h_0)_1(w,z)&=\frac{1}{\sqrt{1+c^2}}\left((g_0)_1(\sqrt{1+c^2}w,z)-c(g_0)_2(t,\sqrt{1+c^2}w,z)\right),\\ (h_0)_2(w,z)&=(g_0)_3(\sqrt{1+c^2}w,z).
\end{align*}
To determine the wave profile $g$, one must now solve the equation for
$$
\tilde{g}_1(t,w,z)=\sqrt{1+c^2}g_1(t,\sqrt{1+c^2}w,z).
$$
A simple computation gives
\begin{equation}\label{eqg1}
\left\{\begin{array}{l}
\frac{\partial \tilde{g}_1}{\partial t} - \Delta \tilde{g}_1 + h_1(\tilde{g}_1)_w + h_2(\tilde{g}_1)_z = \frac{\partial \rho}{\partial w}\\ \tilde{g}(0,w,z)=\sqrt{1+c^2}(g_0)_1(\sqrt{1+c^2}w,z)
\end{array}\right.
\end{equation}
which is a linear heat equation with a source term. The problem is the insuficient information on the term $\partial q/\partial w$, for it does not provide the estimations needed for the existence proof of $\tilde{g}_1$. However, we observe that \eqref{eqg1} is also the equation satisfied by $h_1$. This means that $\tilde{g}_1=h_1$ satisfies \eqref{eqg1} if the initial data coincide. Even though we lose some freedom in the choice of the initial profile, this degeneracy allows us to obtain a solution of the three-dimensional (NS), given by
\begin{align*}
\phi(t,x,y,z)&=\left(\frac{1}{\sqrt{1+c^2}}h_1\left(t,\frac{x-cy}{\sqrt{1+c^2}},z\right), -\frac{c}{\sqrt{1+c^2}}h_1\left(t,\frac{x-cy}{\sqrt{1+c^2}},z\right), h_2\left(t,\frac{x-cy}{\sqrt{1+c^2}},z\right)\right)\\&=: W[h(t)](x,y,z).
\end{align*}
For any $s>2$, set
$$
X_c^s=\left\{\phi\in (L^1_{loc}(\real^3))^3: \phi=W[h],\ h\in (H^s(\real^2))^2 \right\}.
$$
endowed with the induced norm $\|\phi\|_{X_c^s}:=\|h\|_{H^s}$. An elementary computation shows that if $\phi=W[h]\in X_c^s$,
$$
(\Diver\phi)(x,y,z)=(\Diver h)\left(\frac{x-cy}{\sqrt{1+c^2}}, z\right).
$$
On the other hand, write $h=\proj h + \nabla_{w,z}\Psi$. Then
$$
W[\nabla_{w,z}\Psi] = \nabla_{x,y,z} \tilde{\Psi}, \quad \tilde{\Psi}(x,y,z)=\Psi\left(\frac{x-cy}{\sqrt{1+c^2}},z\right).
$$
This implies that $\phi=W[h]$ can be written (uniquely) as $\phi=W[\proj h] + \nabla \tilde{\Psi}$, that is, as the sum of a divergence-free vector field plus a gradient term. In this way, we define naturally $\proj \phi = W[\proj h]$ and
$$
\proj X_c^s=\left\{\phi\in (L^1_{loc}(\real^3))^3: \phi=W[h],\ h\in (\proj H^s(\real^2))^2 \right\}.
$$
These observations show that, to solve \eqref{NS} on $\proj X_c^s$, it suffices to solve the projected equation, whose weak formulation is given by \eqref{duhamel}. We may finally state the existence of global spatial plane waves for the three-dimensional (NS), using Theorem \ref{globexistd2}:
\begin{prop}\label{prop:existpw}
Fix $s\ge 1$ and $c\in\real$. Given $\phi_0\in \proj X_c^s$, there exists a unique solution $\phi\in C([0,\infty), \proj X_c^s)$ of (NS), which is obtained by solving the two-dimensional (NS) system for its profile.
\end{prop}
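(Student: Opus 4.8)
The plan is to transport the global well-posedness of the two-dimensional system (Theorem~\ref{globexistd2}) to the plane-wave class $\proj X_c^s$ through the map $W$, which by construction is a linear isometry from $\proj H^s(\real^2)^2$ onto $\proj X_c^s$. Given $\phi_0\in\proj X_c^s$, I would first recover the unique profile $h_0\in\proj H^s(\real^2)^2$ with $\phi_0=W[h_0]$; uniqueness of $h_0$ is immediate from the definition $\|\phi\|_{X_c^s}=\|h\|_{H^s}$, which forces $W$ to be injective. Theorem~\ref{globexistd2} then supplies a unique global solution $h\in C([0,\infty);\proj H^s(\real^2))\cap L^2_{loc}((0,\infty),\proj H^{s+1}(\real^2))$ with $h(0)=h_0$, and I set $\phi(t):=W[h(t)]$. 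Continuity $\phi\in C([0,\infty),\proj X_c^s)$ is then free, since $\|\phi(t)-\phi(t')\|_{X_c^s}=\|h(t)-h(t')\|_{H^s}$.

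The core of the argument is to verify that this $\phi$ is a mild solution in the sense of \eqref{duhamel}, which reduces to two intertwining identities for $W$. The first is that the three-dimensional Laplacian acts on a plane wave exactly as the two-dimensional Laplacian acts on its profile: if $\psi(x,y,z)=F\big((x-cy)/\sqrt{1+c^2},z\big)$, then $\Delta_{x,y,z}\psi=(\Delta_{w,z}F)\big((x-cy)/\sqrt{1+c^2},z\big)$, because the factors $1/(1+c^2)$ and $c^2/(1+c^2)$ produced by $\partial_x^2$ and $\partial_y^2$ sum to one; since $e^{t\Delta}$ commutes with $\proj$ and the amplitude factors in $W$ are constants, this gives $e^{t\Delta}W[h_0]=W[e^{t\Delta}h_0]$ and, together with the definitional relation $\proj W[h]=W[\proj h]$, also $e^{tA}W[h]=W[e^{tA}h]$. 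The second is that the convective term transforms consistently: the same cancellation shows that $(\phi\cdot\nabla_{x,y,z})$ and $(h\cdot\nabla_{w,z})$ coincide as first-order operators on plane waves, so that $(\phi\cdot\nabla)\phi=W\big[(h\cdot\nabla_{w,z})h\big]$ and hence $\proj\big((\phi\cdot\nabla)\phi\big)=W\big[\proj((h\cdot\nabla_{w,z})h)\big]$. Applying the linear map $W$ to the two-dimensional Duhamel formula satisfied by $h$ and invoking these identities then reproduces \eqref{duhamel} for $\phi$ verbatim.

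Uniqueness follows by running the intertwining backwards. Any solution $\phi\in C([0,\infty),\proj X_c^s)$ of (NS) is at each time of the form $W[\tilde h(t)]$ for a unique path $\tilde h$ in $\proj H^s(\real^2)^2$, and the two identities above show that $\tilde h$ satisfies the two-dimensional mild formulation with $\tilde h(0)=h_0$; the uniqueness part of Theorem~\ref{globexistd2} then gives $\tilde h=h$, whence $\phi=W[h]$. I expect the only genuine difficulty to be the bookkeeping of these two intertwining relations rather than any analytic obstacle, the subtle point being that $W$ is not a pure change of variables but mixes the vector components (the first two components of $W[h]$ are proportional to $h_1$, the third equals $h_2$); one must therefore confirm that both the Leray projection and the nonlinear term respect this mixing under the anisotropic scaling $w=(x-cy)/\sqrt{1+c^2}$. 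Once this is settled, everything else is routine.
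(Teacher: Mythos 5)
Your proposal is correct and follows essentially the same route as the paper: the discussion preceding the proposition (reduction of the plane-wave ansatz to the two-dimensional (NS) for the profile, the definition of $W$, and the identities $\proj W[h]=W[\proj h]$ and $\Delta_{x,y,z}W[h]=W[\Delta_{w,z}h]$) is precisely the paper's proof, with Theorem~\ref{globexistd2} supplying global existence and uniqueness for the profile. Your write-up merely makes explicit the intertwining of $W$ with the heat semigroup, the projection and the convective term at the level of the Duhamel formula \eqref{duhamel}, which the paper leaves as ``easy to check.''
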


\section{Main results}
Throughout this section, $s>2$ and $c\in \real$ will be fixed. Set
$$
E=\proj L^3(\real^3) \oplus \proj X^s_c.
$$

\begin{teo}[Local well-posedness over $E$]\label{lwpE}
Given $u_0=v_0+\phi_0\in E$, there exists $T>0$ and a solution $u\in C([0,T); E)$ of (NS). If $\phi$ is the solution of (NS) with initial data $\phi_0$ (cf. Proposition \ref{prop:existpw}),
\begin{equation}\label{decayv1}
t^{(1-3/p)/2}(u-\phi)\in BC([0,T],\proj L^p(\real^3)), 3\le p\le \infty,
\end{equation}
\begin{equation}\label{decaygradv2}
t^{(1-3/2p)}\nabla (u-\phi)\in BC([0,T],\proj (L^p(\real^3))^3), 3\le p< \infty,
\end{equation}
Moreover, for any $T^*>0$, there exists $\epsilon=\epsilon(T^*,\phi_0)>0$ such that, if $\|v_0\|_3 <\epsilon$, $T=T^*$ and $\|v(T^*)\|_3\le 2\epsilon$.
\end{teo}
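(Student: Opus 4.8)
The plan is to write $u=\phi+v$, where $\phi$ is the global spatial plane wave with data $\phi_0$ furnished by Proposition \ref{prop:existpw}, and to solve for the $\proj L^3$–perturbation $v$. Subtracting the equation for $\phi$ from \eqref{NS} and using $\Diver\phi=\Diver v=0$, one finds that $v$ solves the perturbed system $v_t-\Delta v+(\phi\cdot\nabla)v+(v\cdot\nabla)\phi+(v\cdot\nabla)v=\nabla\pi$, $\Diver v=0$, $v(0)=v_0$, whose projected Duhamel form is $v(t)=e^{tA}v_0-\int_0^t e^{(t-s)A}\proj\big[(\phi\cdot\nabla)v+(v\cdot\nabla)\phi+(v\cdot\nabla)v\big](s)\,ds$. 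Since $s>2$, the embedding $H^s(\real^2)\hookrightarrow L^\infty(\real^2)$ gives $\phi\in C([0,\infty);L^\infty(\real^3))$, so $M_T:=\sup_{[0,T]}\|\phi(t)\|_\infty<\infty$ on every compact interval. Because $\Diver\phi=\Diver v=0$, each quadratic term is a divergence, $(\phi\cdot\nabla)v=\Diver(\phi\otimes v)$, $(v\cdot\nabla)\phi=\Diver(v\otimes\phi)$ and $(v\cdot\nabla)v=\Diver(v\otimes v)$, so the gradient can be moved onto the semigroup and estimated through \eqref{estimativalinear2}.

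For local existence I would run the Kato iteration for $v$ in the space $Y_T$ with norm $\sup_{0<t<T}\|v(t)\|_3+\sup_{0<t<T}t^{1/4}\|v(t)\|_6$. Using \eqref{estimativalinear1}–\eqref{estimativalinear2} with exponents $(q,p)=(3,6)$ and $(2,3)$ (together with the interpolation $\|v\|_4\le\|v\|_3^{1/2}\|v\|_6^{1/2}$), the bilinear term obeys a scaling–invariant bound $\|B(v,v)\|_{Y_T}\lesssim\|v\|_{Y_T}^2$ with no power of $T$, exactly as in Theorem \ref{kato}, whereas the two $\phi$–linear terms, which involve only $\|\phi\|_\infty$, produce a bound $\lesssim M_T\,T^{1/2}\|v\|_{Y_T}$ carrying a favourable power of $T$. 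Since $t^{1/4}\|e^{tA}v_0\|_6\to0$ as $t\to0^+$ for $v_0\in\proj L^3$ (split $v_0$ into an $L^3\cap L^6$ part and a small $L^3$ remainder), the data term has small weighted component for small $T$, so the map is a contraction on a ball of size $\sim\|v_0\|_3$ in $L^3$ and small in the weighted norm. A standard bootstrap upgrades $v\in Y_T$ to the full decay statements \eqref{decayv1}–\eqref{decaygradv2} for all admissible $p$, and the usual approximation argument gives continuity $v\in C([0,T);\proj L^3)$ up to $t=0$; together with $\phi\in C([0,\infty);\proj X^s_c)$ this yields $u=\phi+v\in C([0,T);E)$.

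The hard part is the ``moreover'' statement, that is, reaching a prescribed time $T^*$. On $[0,T^*]$ the linear operator $L(v)(t):=\int_0^t e^{(t-s)A}\proj\,\Diver(\phi\otimes v+v\otimes\phi)(s)\,ds$ has $Y_{T^*}$–norm $\lesssim M_{T^*}(T^*)^{1/2}$, which need \emph{not} be small, so it cannot be absorbed into a contraction: this is the main obstacle, and it must be treated exactly. My plan is to split $[0,T^*]$ into finitely many subintervals on each of which the corresponding factor is small, so that $I+L$ is a contractive perturbation of the identity on each piece; composing the pieces shows $I+L$ is invertible on $Y_{T^*}$ with $\|(I+L)^{-1}\|\le C_{T^*}=C_{T^*}(\phi_0)$. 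One then recasts the integral equation as the fixed point $v=(I+L)^{-1}\big(e^{\cdot A}v_0-B(v,v)\big)$; since $\|e^{\cdot A}v_0\|_{Y_{T^*}}\lesssim\|v_0\|_3$ and $B$ is quadratic, this is a contraction on a small ball as soon as $\|v_0\|_3<\epsilon$ with $\epsilon=\epsilon(T^*,\phi_0)$ small enough, giving a solution on the whole of $[0,T^*]$, so $T=T^*$, with $\|v\|_{Y_{T^*}}\lesssim\|v_0\|_3$.

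For the exit bound I would run a continuation argument with threshold $2\epsilon$: the leading contribution to $\|v(T^*)\|_3$ is the linear evolution of the data and the nonlinear correction is quadratic in the (small) norm $\|v\|_{Y_{T^*}}$, so, exactly as the factor $2$ appears in Theorem \ref{kato}, the fixed-point ball can be taken of radius twice the (propagator–measured) data term; choosing $\epsilon$ small enough, depending on $C_{T^*}$, then keeps $\|v(t)\|_3\le2\epsilon$ throughout $[0,T^*]$. I expect the only genuinely delicate points to be the invertibility of $I+L$ in the weighted norm (handled by the subdivision above, with the long-interval linear coupling being the true source of difficulty) and, as always in the critical space $\proj L^3$, the continuity of $v$ at $t=0$; the decay of the profile from Lemma \ref{lema:estimativa} is what one would invoke if one wished to push $T^*$ to $+\infty$. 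The remaining steps are the routine Kato estimates applied to the perturbed equation.
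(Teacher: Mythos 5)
Your decomposition $u=\phi+v$ and the integral equation you solve for $v$ are exactly the paper's starting point (equation \eqref{duhamelv}), and you correctly isolate the one genuine difficulty: on a prescribed interval $[0,T^*]$ the $\phi$-linear coupling is not small, so it cannot simply be absorbed into a contraction. Where you differ from the paper is in how this coupling is neutralized. The paper keeps Kato's successive approximations but measures every iterate in exponentially weighted norms $e^{-Lt}t^{(1-\gamma)/2}\|\cdot\|_{3/\gamma}$ and $e^{-Lt}t^{1/2}\|\nabla \cdot\|_{3}$; a H\"older-in-time trick then puts a coefficient of order $M(pL)^{-1/p}$ times a power of $T^*$ in front of the linear terms, which is made small by taking $L$ large depending on $T^*$ and $M=\sup_{[0,T^*]}\|\phi\|_{1,\infty}$. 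You instead invert the Volterra operator $I+L$ by subdividing $[0,T^*]$ and then contract only the quadratic part. Both devices are legitimate and essentially interchangeable (the exponential weight is a continuous version of your subdivision). Your route has two incidental advantages: writing the linear terms in divergence form you only need $\|\phi\|_\infty$ rather than $\|\phi\|_{1,\infty}$ (this divergence-form trick is precisely the one the paper itself uses later, in the stability proof), and your two-norm space $Y_T$ is lighter than the paper's full scale of weighted $L^{3/\gamma}$ and gradient norms, at the price of a bootstrap to recover \eqref{decayv1}--\eqref{decaygradv2}; in that bootstrap note that the divergence form costs $(t-s)^{-1}$ on $L^3$ for the gradient estimate, so you must feed the $L^6$ norm of $v$ into the $\phi$-linear terms there.

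The one step that does not work as stated is your exit bound. You argue that the correction to $e^{t\Delta}v_0$ is quadratic in the small norm, so that $\|v(T^*)\|_3\le 2\epsilon$ follows as in Theorem \ref{kato}. This overlooks the $\phi$-linear Duhamel term: since $v+Lv=e^{tA}v_0-B(v,v)$, the value $v(T^*)$ contains $-(Lv)(T^*)$, whose $L^3$ norm is $\lesssim M_{T^*}(T^*)^{1/2}\|v\|_{Y_{T^*}}\lesssim M_{T^*}(T^*)^{1/2}C_{T^*}\,\epsilon$. This is linear in $\epsilon$, with a constant that has no reason to be below $1$, so no choice of small $\epsilon$ produces the factor $2$; a continuation argument does not rescue it either, since closing it would again require $M_{T^*}(T^*)^{1/2}$ small. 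What your argument actually delivers is $\|v(T^*)\|_3\le C(T^*,\phi_0)\|v_0\|_3$. To be fair, the paper's own proof has the same limitation: its weighted bound only gives $\|v(t)\|_3\le 2e^{Lt}\|v_0\|_3$, not $2\epsilon$ at $t=T^*$. And the weaker bound is all that is used downstream: in the stability theorem one only needs $\|v(t_\delta)\|_3$ below the small-data threshold, which is arranged by shrinking $\epsilon$. So the flaw is inessential to the program, but you should either state the conclusion with a constant $C(T^*,\phi_0)$ in place of $2$, or supply an argument for why the linear term can be absorbed.
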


\begin{proof} \textit{Step 1. Introduction.}
If one takes the difference between the Duhamel's formulae for $u$ and $\phi$, one arrives at an equivalent problem for $v=u-\phi$:
\begin{equation}\label{duhamelv}
v(t)=e^{t\Delta}v_0 - \int_0^t e^{(t-s)\Delta}\proj\left((v\cdot\nabla)v+(\phi\cdot\nabla)v+(v\cdot\nabla)\phi\right)(s)ds.
\end{equation}
We follow closely the technique in \cite{kato} to solve this equation using the method of successive approximations: given $T^*>0$, consider the sequence
$$
v_{n+1}=v_1 + Gv_n, \ n\ge1,
$$
where
\begin{equation}
v_1=e^{t\Delta}v_0,\quad Gv:=-\int_0^t e^{(t-s)\Delta}\proj\left((v\cdot\nabla)v+(\phi\cdot\nabla)v+(v\cdot\nabla)\phi\right)(s)ds,\ t<T^*.
\end{equation}
The goal will be to obtain the existence of a limit $v$, which will satisfy Duhamel's formula. We do this if $\|v_0\|_3$ is small; if it is not, then one makes $T^*$ small and carries out the same proof.

\textit{Step 2. Estimations on successive approximations.} Fix $L>0$ large enough. Now we prove, by induction, the existence of such $v_n$, satisfying
\begin{equation}\label{estimativainducao}
e^{-Lt}t^{(1-\gamma)/2}v_n\in BC([0,T^*];\proj L^{3/\gamma})\mbox{ with norm smaller than }K_n, \ 0<\gamma\le 1,
\end{equation}
\begin{equation}\label{estimativainducao2}
e^{-Lt}t^{1/2}\nabla v_n\in BC([0,T^*];\proj L^{3})\mbox{ with norm smaller than }K'_n,
\end{equation}
Starting with $n=1$, \eqref{estimativainducao} and \eqref{estimativainducao2} follow directly from \eqref{estimativalinear1} and \eqref{estimativalinear2}, with
$$
K_1,K_1'\lesssim \|v_0\|_3.
$$

We now assume that, for a given $n\in\nat$, the desired $v_n$ exists. Given $r_1, r_2, r_3\le 3/\gamma$, using \eqref{estimativalinear1},
\begin{align*}
\|Gw_n(t)\|_{3/\gamma}&\lesssim \int_0^t \frac{1}{(t-s)^{\frac{1}{2}\left(\frac{3}{r_1}-\gamma\right)}}\|(v_n\cdot\nabla)v_n\|_{r_1}ds +  \int_0^t \frac{1}{(t-s)^{\frac{1}{2}\left(\frac{3}{r_2}-\gamma\right)}}\|(\phi\cdot\nabla)v_n\|_{r_2}ds \\&+  \int_0^t \frac{1}{(t-s)^{\frac{1}{2}\left(\frac{3}{r_3}-\gamma\right)}}\|(v_n\cdot\nabla)\phi\|_{r_3}ds.
\end{align*}
For the first term, one chooses $r_1=3/(\gamma+1)$ and applies Hölder's inequality:
$$
\|(v_n(t)\cdot\nabla)v_n(t)\|_{r_1}\le \|v_n(t)\|_{3/\gamma}\|\nabla v_n(t)\|_3 \lesssim K_nK_n' \frac{e^{2Lt}}{t^{1-\gamma/2}}.
$$
By Theorem \ref{globexistd2}, there exists $M>0$ such that
\begin{equation}
\sup_{0\le t\le T^*}\|\phi(t)\|_{1,\infty} \le M.
\end{equation}
Then, choosing $r_2=3$ and $r_3=3/\gamma$,
\begin{align*}
\|Gv_n(t)\|_{3/\gamma}&\lesssim K_nK_n'\int_0^t \frac{e^{2Ls}}{(t-s)^{\frac{1}{2}}}\frac{1}{s^{1-\gamma/2}}ds + MK'_n \int_0^t \frac{e^{Ls}}{(t-s)^{(1-\gamma)/2}}\frac{1}{s^{1/2}}ds +  MK_n \int_0^t \frac{e^{Ls}}{s^{(1-\gamma)/2}}ds 
\end{align*}
Set $p>2$. Then
\begin{align*}
\int_0^t \frac{e^{Ls}}{(t-s)^{(1-\gamma)/2}}\frac{1}{s^{1/2}}ds&\le \left(\int_0^t e^{pLs}ds\right)^{\frac{1}{p}}\left(\int_0^t \frac{1}{(t-s)^{(1-\gamma)p'/2}}\frac{1}{s^{p'/2}}ds\right)^{\frac{1}{p'}}\\&\lesssim \frac{e^{Lt}}{(pL)^{\frac{1}{p}}}t^{-(1-\gamma)/2} (T^*)^{\frac{2-p'}{2p'}}.
\end{align*}
Similarly,
\begin{equation}
\int_0^t \frac{e^{Ls}}{s^{(1-\gamma)/2}}ds \lesssim \frac{e^{Lt}}{(pL)^{\frac{1}{p}}}t^{-(1-\gamma)/2} (T^*)^{\frac{1}{p'}}
\end{equation}
Hence
\begin{align*}
\|Gv_n(t)\|_{3/\gamma}\lesssim e^{Lt}t^{-(1-\gamma)/2}\left(e^{LT^*}K_nK_n' + \frac{(T^*)^{\frac{2-p'}{2p'}}}{(pL)^{\frac{1}{p}}}MK_n'+\frac{(T^*)^{\frac{1}{p'}}}{(pL)^{\frac{1}{p}}}MK_n\right)
\end{align*}
This implies that
\begin{equation}\label{desigualdadekn}
K_{n+1}\lesssim K_1 + e^{LT^*}K_nK_n' + \frac{(T^*)^{\frac{2-p'}{2p'}}}{(pL)^{\frac{1}{p}}}MK_n'+\frac{(T^*)^{\frac{1}{p'}}}{(pL)^{\frac{1}{p}}}MK_n.
\end{equation}

We estimate $\nabla Gw_n(t)$ in a similar fashion:
\begin{align*}
\|\nabla Gw_n(t)\|_{3}&\lesssim \int_0^t \frac{1}{(t-s)^{\frac{1}{2}\left(1+\frac{3}{r_1}-\gamma\right)}}\|(w_n\cdot\nabla)w_n\|_{r_1}ds + \lambda \int_0^t \frac{1}{(t-s)^{\frac{1}{2}\left(1+\frac{3}{r_2}-\gamma\right)}}\|(\psi\cdot\nabla)w_n\|_{r_2}ds \\&+ \lambda \int_0^t \frac{1}{(t-s)^{\frac{1}{2}\left(1+\frac{3}{r_3}-\gamma\right)}}\|(w_n\cdot\nabla)\psi\|_{r_3}ds\\&\lesssim K_nK_n'\int_0^t \frac{e^{2Ls}}{(t-s)^{(1+\gamma)/2}}\frac{1}{s^{1-\gamma/2}}ds  +  MK'_n \int_0^t \frac{e^{Ls}}{(t-s)^{1/2}}\frac{1}{s^{1/2}}ds \\&+  MK_n \int_0^t \frac{e^{Ls}}{(t-s)^{\gamma/2}}\frac{1}{s^{(1-\gamma)/2}}ds\\&\lesssim e^{Lt}t^{-1/2}\left(e^{LT^*}K_nK_n' + \frac{(T^*)^{1/p'}}{(pL)^{\frac{1}{p}}}MK_n' +\frac{(T^*)^{1/p'}}{(pL)^{\frac{1}{p}}}MK_n \right).
\end{align*}
Consequently,
\begin{equation}\label{desigualdadekn'}
K'_{n+1}\lesssim K_1 +e^{LT^*}K_nK_n' + \frac{(T^*)^{1/p'}}{(pL)^{\frac{1}{p}}}MK_n' +\frac{(T^*)^{1/p'}}{(pL)^{\frac{1}{p}}}MK_n.
\end{equation}
The recurrence relations \eqref{desigualdadekn} and \eqref{desigualdadekn'} can be solved easily: setting $\mathcal{K}_n=\max\{K_n,K_n'\}$, the worst-case scenario is 
\begin{equation}
\mathcal{K}_{n+1}=f(\mathcal{K}_n)=: C\|v_0\|_3 + C\left[e^{LT^*}\mathcal{K}_n + \frac{(T^*)^{1/p'} + (T^*)^{\frac{2-p'}{2p'}}}{(pL)^{\frac{1}{p}}}M \right]\mathcal{K}_n,
\end{equation}
where $C$ depends only on $p$. For $L$ large enough (depending on $T^*$ and $M$) and for $\|v_0\|_3<\epsilon$ small, 
the above recurrence gives
$$
K_n, K_n'\lesssim \mathcal{K}=2\|v_0\|_3, \ n\in\nat.
$$
\textit{Step 3. Convergence.} Take $w_n=v_n-v_{n-1}$. Setting 
$$
W_n=\sup_{0\le t\le T^*, 0<\gamma\le 1} \left\{e^{-Lt}t^{(1-\gamma)/2}\|w_n(t)\|_{3/\gamma}\right\}
$$
and following the same procedure as in Step 2,
$$
W_{n+1}\le C\left(e^{LT^*}\mathcal{K} + \frac{(T^*)^{1/p'} + (T^*)^{\frac{2-p'}{2p'}}}{(pL)^{\frac{1}{p}}}M\right)W_n\le \frac{1}{2}W_n \le \frac{1}{2^n}W_1
$$
Hence
$$
v_n \to v \mbox{ on } BC([0,T^*], L^{3}(\real^3)),\quad e^{-Lt}t^{(1-\gamma)/2}v_n \to e^{-Lt}t^{(1-\gamma)/2}v \mbox{ on } BC([0,T^*], L^{3/\gamma}(\real^3))
$$
and
$$
e^{-Lt}t^{1/2}\nabla v_n \to e^{-Lt}t^{1/2}\nabla v \mbox{ on } BC([0,T^*], (L^{3}(\real^3))^3).
$$
These convergences then imply that
$$
Gv_n \to Gv \mbox{ on } BC([0,T^*], L^{3}(\real^3)),
$$
and so $v=v_1+Gv$, that is, $v$ is a solution of \eqref{duhamelv}. It is now easy to check the claimed decay of $\nabla v$ using the Duhamel's formula for $v$. For the $L^\infty$ norm, one estimates
$$
\|v(t)\|_\infty^2\lesssim \|v(t)\|_{6}\|\nabla v(t)\|_6 \lesssim \frac{\mathcal{K}^2}{t}.
$$
\textit{Step 4. Uniqueness.} Assume that $v_1, v_2$ verify \eqref{duhamelv}. Then, taking the difference $w=v_1-v_2$, it suffices to check that $w\equiv 0$ close to $t=0$. Given $\delta>0$, there exists $t_0>0$ such that, for $t<t_0$,
$$
t^{(1-\gamma)/2}\|v_1(t)\|_{3/\gamma},t^{(1-\gamma)/2}\|v_2(t)\|_{3/\gamma}\le \delta.
$$
An analogous computation to that of Step 3 gives
\begin{align*}
e^{-Lt}t^{(1-\gamma)/2}\|w(t)\|_{3/\gamma}&\le C\left(e^{Lt_0}\delta + \frac{(t_0)^{1/p'} + (t_0)^{\frac{2-p'}{2p'}}}{(pL)^{\frac{1}{p}}}M\right)e^{-Lt}t^{(1-\gamma)/2}\|w(t)\|_{3/\gamma}\\&\le \frac{1}{2}e^{-Lt}t^{(1-\gamma)/2}\|w(t)\|_{3/\gamma},
\end{align*}
for $t_0, \delta>0$ small enough. This implies $w(t)=0$ for $t<t_0$, which completes the proof.
\end{proof}
\begin{teo}[$L^3$-stability of spatial plane waves]
Given $\phi_0\in X_c^s$, there exists $\epsilon>0$ such that, for any $v_0\in L^3(\real^3)$ with $\|v_0\|_3<\epsilon$, the solution $u$ of (NS) with initial data $v_0+\phi_0$ is global and satisfies
$$
\|u(t)-\phi(t)\|_{L^p(\real^3)}\lesssim \frac{1}{t^{(1-3/p)/2}},\ t>0, \ 3\le p<\infty.
$$
\end{teo}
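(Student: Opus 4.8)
\begin{sproof}
The plan is to combine the local theory of Theorem \ref{lwpE} with the decay of the profile furnished by Lemma \ref{lema:estimativa}, through a continuation argument. Writing $v=u-\phi$, so that $v$ solves \eqref{duhamelv} with $v(0)=v_0$, the starting observation is that, although $\phi(t)$ never lies in $L^3(\real^3)$, it decays uniformly: since $\phi=W[h]$ with $h$ the two-dimensional profile, one has $\|\phi(t)\|_\infty\le C(c)\|h(t)\|_\infty$, and Lemma \ref{lema:estimativa} (applied with $t_0$ large, where $\|h(t_0)\|_2$ is as small as we wish) yields, for any prescribed $\delta>0$, a time $T^*=T^*(\delta,\phi_0)$ with
\[
\|\phi(t)\|_\infty\le \delta\, t^{-1/2},\qquad t\ge T^* .
\]

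First I would fix $\delta$ small (a threshold to be read off from the estimates below) and the associated $T^*$; this in turn fixes $M=\sup_{[0,T^*]}\|\phi\|_{1,\infty}$ and hence the constant $L=L(T^*,M)$ appearing in Theorem \ref{lwpE}. Applying that theorem on $[0,T^*]$, for $\|v_0\|_3<\epsilon$ small (depending on $T^*$) the solution exists on $[0,T^*]$, and the bounds established in its proof (with the weight $e^{-Lt}$) give $\sup_{0<\gamma\le1}(T^*)^{(1-\gamma)/2}\|v(T^*)\|_{3/\gamma}\lesssim e^{LT^*}\|v_0\|_3=:\epsilon'$, which is as small as we like. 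This is the small ``initial datum'' from which I would restart.

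The heart of the argument is a global fixed point for \eqref{duhamelv} posed on $[T^*,\infty)$, in the self-similar norm $\mathcal N=\sup_{t\ge T^*}\sup_{0<\gamma\le 1}t^{(1-\gamma)/2}\|v(t)\|_{3/\gamma}$ (so $p=3/\gamma\in[3,\infty)$). Here the main obstacle appears: the two cross terms are \emph{linear} in $v$ with coefficient $\phi$, which decays only at the critical rate $t^{-1/2}$, and estimating them directly by Hölder (through $\|\phi\|_\infty$ and $\|\nabla\phi\|_\infty$) produces a logarithmically divergent time integral. The way around this is to use that both $v$ and $\phi$ are divergence-free, so that
\[
(\phi\cdot\nabla)v=\Diver(\phi\otimes v),\qquad (v\cdot\nabla)\phi=\Diver(v\otimes\phi);
\]
I would then move the derivative onto the semigroup and apply \eqref{estimativalinear2}. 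Since $\|\phi\otimes v\|_3,\ \|v\otimes\phi\|_3\lesssim \delta\,s^{-1/2}\|v(s)\|_3\lesssim \delta\mathcal N\,s^{-1/2}$, the resulting integrals $\int_{T^*}^t(t-s)^{-(1-\gamma/2)}\delta\mathcal N\,s^{-1/2}\,ds$, split at $s=t/2$, reproduce exactly the weight $t^{-(1-\gamma)/2}$ with the small prefactor $\delta$ and no logarithm. The quadratic term $(v\cdot\nabla)v=\Diver(v\otimes v)$ is controlled by the standard Kato bilinear estimate and contributes $\mathcal N^2$, while the free evolution $e^{(t-T^*)\Delta}v(T^*)$ contributes the small bound $\epsilon'$ of the previous step. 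Thus the successive approximations obey $\mathcal N_{n+1}\lesssim \epsilon'+\delta\mathcal N_n+\mathcal N_n^2$, which closes for $\delta,\epsilon'$ small and gives a global solution on $[T^*,\infty)$ with $\|v(t)\|_{3/\gamma}\lesssim t^{-(1-\gamma)/2}$.

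Finally I would concatenate the two pieces: uniqueness in Theorem \ref{lwpE} identifies this extension with the local solution, so $u$ is global, and combining the bound just obtained on $[T^*,\infty)$ with the (finite, since $T^*$ is fixed) local bound on $[0,T^*]$ yields $\|u(t)-\phi(t)\|_p\lesssim t^{-(1-3/p)/2}$ for all $t>0$ and $3\le p<\infty$. I expect the genuinely delicate point to be the divergence-form treatment of the linear cross terms, namely checking that the critical decay $\|\phi(t)\|_\infty\sim t^{-1/2}$ is, once the derivative sits on the heat kernel, exactly enough to avoid the logarithm and to be absorbed by taking $T^*$ large; the degeneration of the kernel exponent as $\gamma\to0$ (i.e. $p\to\infty$) is a secondary nuisance, sidestepped by restricting to $p<\infty$ as in the statement.
\end{sproof}
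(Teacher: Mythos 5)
Your proposal is correct and follows essentially the same route as the paper: wait until the profile decays like $\delta t^{-1/2}$ (Lemma \ref{lema:estimativa}), reach that time via the local theory of Theorem \ref{lwpE} with $\|v(T^*)\|_3$ still small, then run a global small-data fixed point on $[T^*,\infty)$ in the scale-invariant norms $t^{(1-3/p)/2}\|v(t)\|_p$, using the divergence form of the nonlinearity to move the derivative onto the heat kernel so that the critical decay of $\|\phi\|_\infty$ enters linearly with the small prefactor $\delta$. The differences are only cosmetic: you use successive approximations and estimate each cross term $\Diver(\phi\otimes v)$, $\Diver(v\otimes\phi)$ separately through $L^3$, while the paper runs a contraction in the space $\mathcal{E}$ and estimates the combined tensor $(v+\phi)\otimes(v+\phi)-\phi\otimes\phi$ in $L^p$.
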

\begin{proof}
First of all, for any $\delta>0$ fixed, there exists $t_\delta$ such that the $L^2$ norm of the profile of $\phi$ is smaller than $\delta$. It then follows from Lemma 3 that, for $t>t_\delta$,
$$
\|\phi(t)\|_\infty\lesssim \frac{2\delta}{(t-t_\delta)^{1/2}}.
$$
From Theorem \ref{lwpE}, it follows that, for $\|v_0\|_3<\epsilon$ small enough, the solution $u$ of (NS) with initial data $\phi_0+v_0$ exists up to $T=t_\delta$ and $\|v(t_\delta)\|_3<2\epsilon$. We then take $t_\delta$ as our starting point and prove global existence for $v$. For the sake of simplicity, we redefine $v_0:=v(t_\delta)$ and $\phi_0:=\phi(t_\delta)$.

Consider the space
$$
\mathcal{E}=\{v\in C([0,\infty), L^3(\real^3)): t^{\frac{1}{2}-\frac{3}{2p}}\|v(t)\|_p\le M, \forall p\ge3\}.
$$
endowed with the distance
$$
d(v,w)=\sup_{p>3, t>0} \{t^{\frac{1}{2}-\frac{3}{2p}}\|v(t)-w(t)\|_p\} + \|v-w\|_{L^\infty((0,\infty);L^3(\real^3)}.
$$
For any $v\in\mathcal{E}$, set
$$
\Phi(v)=e^{t\Delta}v_0 - \int_0^t e^{(t-s)\Delta}\proj\left((v\cdot\nabla)v+(\phi\cdot\nabla)v+(v\cdot\nabla)\phi\right)(s)ds.
$$
We shall prove that, under suitable choices on $\delta$ and $M$, $\Phi:\mathcal{E}\to\mathcal{E}$ is a strict contraction, yielding the result. To this end, one must write the nonlinear terms as
$$
(v\cdot\nabla)v+(\phi\cdot\nabla)v+(v\cdot\nabla)\phi=\nabla\cdot\left((v+\phi)\otimes (v+\phi) - \phi\otimes\phi\right).
$$
Using this, one is able to estimate $\Phi$ without involving any derivatives of $v$ and $\phi$. Indeed, for any $p\ge 3$, it follows from \eqref{estimativalinear1} that
\begin{align*}
\|\Phi(v(t))\|_p&\le \frac{C\|v_0\|_3}{t^{\frac{1}{2}-\frac{3}{2p}}} + C\int_0^t \frac{1}{(t-s)^{1/2}}\left\|(v+\phi)\otimes(v+\phi)-\phi\otimes\phi\right\|_p(s) ds\\
&\le C\frac{\|v_0\|_3}{t^{\frac{1}{2}-\frac{3}{2p}}} + C\int_0^t \frac{1}{(t-s)^{1/2}}\left(\|v(s)\|_{2p}^2 + \|\phi(s)\|_{\infty}\|v(s)\|_p\right) ds\\&\le C\frac{\|v_0\|_3}{t^{\frac{1}{2}-\frac{3}{2p}}} + C\int_0^t \frac{1}{(t-s)^{1/2}}\frac{M^2+2\delta M}{s^{1-\frac{3}{2p}}}ds\\ &\le \frac{1}{t^{\frac{1}{2}-\frac{3}{2p}}}C\left(\epsilon + M^2+\delta M\right).
\end{align*}
Thus $\Phi(v)\in\mathcal{E}$ if 
$$C(\epsilon+M^2)\le (1-C\delta)M.$$
This condition can be verified if $\epsilon\ll 1$, $\delta<1/2C$ and $M<1/2C$. Moreover,
\begin{align*}
\|\Phi(v(t))-\Phi(w(t))\|_p&\le C\int_0^t \frac{1}{(t-s)^{1/2}}\|(v+\phi)\otimes(v+\phi) - (w+\phi)\otimes(w+\phi)\|_p(s)ds\\&\le  C\int_0^t \frac{1}{(t-s)^{1/2}}\left(\|v(s)\|_{2p} + \|w(s)\|_{2p}\right)\|v(s)-w(s)\|_{2p} \\&+ \int_0^t\frac{1}{(t-s)^{1/2}}\|\phi(s)\|_\infty\|v(s)-w(s)\|_{p} ds\\&\le C\left(\int_0^t \frac{1}{(t-s)^{1/2}}\frac{M+2\delta}{s^{1-\frac{3}{2p}}}ds\right)d(v,w),
\end{align*}
which implies that
$$
d(\Phi(v),\Phi(w)) \le C\left(M+2\delta\right)d(v,w).
$$
Hence, if $C\left(M+2\delta\right)<1/2$, $\Phi$ is a strict contraction over $\mathcal{E}$, which concludes the proof.
\end{proof}
\section{Further comments}
\subsection{Superposition of spatial plane waves}
One could follow the ideas in \cite{ms2} to build a local well-posedness theory for either a numerable or a continuous superposition of spatial plane waves. In the first case, the method should follow the same lines as the single wave case. However, due to the interaction between waves with different speeds, the corresponding stability result should include a smallness condition on the waves themselves (and not simply on the remainder).

In the continuous case, as observed in \cite{ms2}, one solves a linear heat equation for the continuous superposition of plane waves and then solves the remainder equation. In the dispersive case, the fact that the linear solution was a continuous plane wave had an important role in its decay estimates and integrability properties. In the diffusive case, however, the regularization given by the heat kernel already gives the required estimates, even if the linear solution is not a continuous plane wave. The conclusion is that, setting
$$
E=L^3(\real^3) + L^p_z(L^{2p'}_{x,y}(\real^3)), 1<p<\infty,
$$
one should be able to derive directly a local well-posedness result over $E$ and also a global result for small data, using only Kato's method: one simply takes the linear evolution of the component in $L^p_zL^{2p'}_{x,y}$ and then solves the remainder equation in $L^3$.
\subsection{The complex Ginzburg-Landau equation}
Consider the initial value problem for the complex Ginzburg-Landau equation over $\real^d$:
\begin{equation}\label{CGL}\tag{CGL}
u_t=(\epsilon+i)\Delta u + (i-k)|u|^2u, \quad u(0)=u_0,\quad \epsilon, k>0.
\end{equation}
We claim that \eqref{CGL} shares many properties with (NS) (see \cite{ginibrevelo1}, \cite{ginibrevelo2}): in fact,
\begin{itemize}
\item the linear semigroup $\{S(t)\}_{t\ge 0}$ for (CGL) can be seen to be a convolution with a kernel which shares the properties of the heat kernel. Hence the decay estimates of the linear part will be the same and one has $\|S(t)u_0\|_2\to 0$ as $t\to\infty$ (this is easily seen on the Fourier side);
\item for $d=2$, one has global well-posedness for
$$
u_0\in L^2(\real^2), \mbox{ giving } u\in C([0,\infty);L^2(\real^2))\cap C((0,\infty);L^\infty(\real^2))
$$
or
$$
u_0\in H^2(\real^2), \mbox{ giving } u\in C([0,\infty);H^2(\real^2)).
$$
\item for $d=3$ and small viscosity $\epsilon\ll 1$, global well-posedness for generic initial data is unknown. In this case, one might expect that the Schrödinger part of the equation (which is focusing) may give birth to the formation of singularities (see \cite{sverak} for strong numerical evidence). 
\item the nonlinearity $|u|^2u$ has the $L^3(\real^3)$-critical decay behaviour verified by the nonlinearity of (NS):
$$
\||S(t)u_0|^2S(t)u_0\|_{L^3(\real^3)}\lesssim \frac{1}{t}\|u_0\|_{L^3(\real^3)},\quad \|\mathbb{P}(e^{tA}u_0\cdot \nabla)e^{tA}u_0\|_{L^3(\real^3)}\lesssim \frac{1}{t}\|u_0\|_{L^3(\real^3)}.
$$
\end{itemize}
With these similarities in mind, one can easily understand how to obtain the analogous results of existence and stability of spatial plane waves for (CGL) for $d=3$: first of all, considering the spatial plane wave ansatz $u(t,x,y,z)=f(t,w,z)$, $w=(x-cy)/\sqrt{1+c^2}$, one sees that $f$ must satisfy (CGL) in dimension two. Using the global well-posedness results, we obtain globally defined spatial plane waves in $\real^3$.

To prove the local existence on
$$
\tilde{E}= L^3(\real^3)\oplus \tilde{X}^2_c,\ \tilde{X}^2_c:=\left\{ \phi\in L^1_{loc}(\real^3): \phi(x,y,z)=f\left(\frac{x-cy}{\sqrt{1+c^2}}, z\right) \mbox{a.e.}, f\in H^2(\real^2) \right\},
$$
one may proceed as in the (NS) case (since the proof relies only on the decay estimates for the semigroup and on the nature of the nonlinearity). To prove stability, one needs the $L^\infty$ decay of the plane wave. Using Kato's method, it is easy to prove that, for $f_0\in L^2(\real^2)$ small enough, there exists a unique solution $f$ of (CGL) with initial data $f_0$ such that
$$
t^{1/2-1/p)}f\in BC([0,\infty); L^p(\real^2)),\quad 2\le p\le \infty,
$$
which settles the $L^\infty$ decay of the wave for small $L^2$ profiles. All that remains is to check that, for any $f_0\in H^2(\real^2)$, the corresponding $H^2$ solution satisfies $\|f(t)\|_2\to 0$. Indeed, one has
$$
\frac{1}{2}\frac{d}{dt}\|f(t)\|_2^2 = -\epsilon \|\nabla f(t)\|_2 - k\|f(t)\|_4^4,\mbox{ thus impliying } \lim_{\tau\to \infty}\int_{\tau}^\infty  \|\nabla f(t)\|_2 +\|f(t)\|_4^4dt=0.
$$
and so, given $\delta>0$, there exist $t, \tau>0$ such that
\begin{align*}
\|f(t)\|_2^2&\lesssim \|S(t)f(\tau)\|_2^2 + \int_{\tau}^t \|f(s)\|_6^3ds \lesssim \|S(t)f(\tau)\|_2^2 + \int_{\tau}^t \|\nabla f(s)\|_2 +\|f(s)\|_4^4ds\\&\lesssim \|S(t)f(\tau)\|_2^2 +\delta \lesssim 2\delta.
\end{align*}
In conclusion, we obtain the $L^3$-stability of spatial plane waves for the complex Ginzburg-Landau equation:
\begin{prop}
Given $\phi_0\in \tilde{X}_c^2$, there exists $\epsilon>0$ such that, for any $v_0\in L^3(\real^3)$ with $\|v_0\|_3<\epsilon$, the solution $u$ of (CGL) with initial data $v_0+\phi_0$ is global and satisfies
$$
\|u(t)-\phi(t)\|_{L^p(\real^3)}\lesssim \frac{1}{t^{(1-3/p)/2}},\ t>0, \ 3\le p<\infty.
$$
\end{prop}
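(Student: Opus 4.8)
The plan is to reproduce, almost verbatim, the contraction-mapping argument used for the $L^3$-stability of spatial plane waves for (NS), replacing the heat semigroup by $S(t)$ and the quadratic term by the cubic one. All the structural ingredients are already in place: $S(t)$ obeys the same decay estimates \eqref{estimativalinear1}--\eqref{estimativalinear2} as $e^{t\Delta}$; the two-dimensional profile is globally defined with $\|f(t)\|_2\to 0$ (from the energy identity above); and, once the profile is small in $L^2$, it enjoys the smoothing bound $\|\phi(t)\|_\infty\lesssim \delta\,t^{-1/2}$. First I would reduce to a small profile exactly as in the (NS) proof: given $\delta>0$, pick $t_\delta$ with $\|f(t_\delta)\|_2<\delta$; by the analogue of Theorem \ref{lwpE} on $\tilde{E}$, for $\|v_0\|_3<\epsilon$ the solution $u=v+\phi$ reaches $t=t_\delta$ with $\|v(t_\delta)\|_3<2\epsilon$. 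Shifting the time origin to $t_\delta$ and relabelling $v_0:=v(t_\delta)$, $\phi_0:=\phi(t_\delta)$, one then has $\|\phi(s)\|_\infty\lesssim\delta\,s^{-1/2}$ for all $s>0$.

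I would next set up the fixed-point map on the same space $\mathcal{E}$ and with the same distance $d$ as in the (NS) stability proof,
$$\Phi(v)=S(t)v_0+(i-k)\int_0^t S(t-s)\bigl(|v+\phi|^2(v+\phi)-|\phi|^2\phi\bigr)(s)\,ds,$$
and aim to show that it is a strict contraction for suitable $\delta,M,\epsilon$. The estimates proceed by expanding the nonlinear difference according to its degree in $v$,
$$|v+\phi|^2(v+\phi)-|\phi|^2\phi = O(|v|^3) + O(|\phi|\,|v|^2) + O(|\phi|^2\,|v|),$$
and bounding each piece with the smoothing estimate $\|S(t-s)g\|_p\lesssim (t-s)^{-\frac32(\frac1q-\frac1p)}\|g\|_q$ for an appropriate intermediate exponent $q$, together with the weighted bounds encoded in $\mathcal{E}$ and the decay of $\phi$. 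The purely $v$-term reduces to a convergent Beta integral giving a contribution $\lesssim M^3\,t^{-(1-3/p)/2}$, and the term $O(|\phi|\,|v|^2)$ gives $\lesssim\delta M^2\,t^{-(1-3/p)/2}$, exactly as the $M^2$ and $\delta M$ terms did for (NS).

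The hard part, and the genuinely new feature compared with (NS), is the term $|\phi|^2 v$ (together with $\phi^2\bar v$), which is \emph{linear} in $v$ but \emph{quadratic} in the profile. For (NS) the wave entered the remainder equation only linearly, producing the integrable weight $\|\phi\|_\infty\|v\|_p\sim\delta M\, s^{-(1-\frac{3}{2p})}$, which paired with the gradient factor $(t-s)^{-1/2}$ into a finite Beta function. Here the corresponding source carries the extra factor $\|\phi(s)\|_\infty^2\sim\delta^2/s$, which is \emph{scaling-critical}: its time integral diverges logarithmically, so the naive Beta estimate does not close, spoiling in particular the unweighted $L^\infty_tL^3$ part of $d$. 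I expect this to be the main obstacle.

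To overcome it I would, first, use the sharper bound $\|\phi(s)\|_\infty\lesssim\min\{C_0,\delta s^{-1/2}\}$ near $s=0$, where $C_0=\|\phi_0\|_\infty<\infty$ since $\phi_0\in\tilde{X}^2_c\hookrightarrow L^\infty$; the crossover at $s\sim\delta^2/C_0^2$ makes the contribution of $(0,\delta^2/C_0^2)$ already $O(\delta^2 M)$. Second, I would exploit that the coefficient of this critical term is $\delta^2$, which can be taken arbitrarily small by enlarging $t_\delta$, and use the freedom to choose $q\in(1,p)$ over the whole scale of exponents controlled by $\mathcal{E}$; if a residual logarithmic factor persists, one absorbs it either into a mildly log-weighted refinement of $\mathcal{E}$ or, more naturally, by invoking the dissipative sign of $-k|u|^2u$, which provides an a priori control of the critical contribution rather than a mere Duhamel estimate. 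Once $\Phi$ is shown to be a contraction with constant $<1/2$, its fixed point furnishes the global solution $u$ and the decay $\|u(t)-\phi(t)\|_p\lesssim t^{-(1-3/p)/2}$, which completes the proof.
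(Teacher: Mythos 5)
Your overall route is exactly the one the paper intends: shift time so that the profile is small in $L^2$ (hence $\|\phi(t)\|_\infty\lesssim\delta t^{-1/2}$), using the local theory on $\tilde{E}$ to reach $t_\delta$, and then run the contraction in the same space $\mathcal{E}$. Your treatment of the $O(|v|^3)$ and $O(|\phi|\,|v|^2)$ terms is correct. The problem is that the term you yourself single out as the hard part, $2|\phi|^2v+\phi^2\bar{v}$, is in the end never estimated, and none of the remedies you sketch closes the argument, so the proposal does not prove the statement. Your first fix (the bound $\|\phi(s)\|_\infty\le C_0$ near $s=0$) cures only the short-time end of the logarithm; the divergence that matters is at large times, where the best bound this framework yields is
\begin{equation}
\int_0^t\left\|S(t-s)\left(|\phi|^2v\right)(s)\right\|_3 ds\ \lesssim\ M\int_0^t\min\{C_0^2,\ \delta^2/s\}\,ds\ \approx\ \delta^2M\log t,
\end{equation}
which exceeds any fixed multiple of $M$ once $t\ge e^{C/\delta^2}$, no matter how small the (fixed) $\delta$ is; taking $\delta$ smaller only postpones the threshold. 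The freedom in the intermediate exponent $q$ cannot help: $\phi(s)$ belongs to no $L^r(\real^3)$ with $r<\infty$, so H\"older forces $\||\phi|^2v\|_q\le\|\phi\|_\infty^2\|v\|_q$ with $q\ge3$ (the space $\mathcal{E}$ controls nothing below $L^3$), and then the time exponent $\tfrac32-\tfrac{3}{2q}\ge1$ makes the $1/s$ singularity unavoidable. A log-weighted space does not close either: if $\|v(s)\|_3\le M\Lambda(s)$ with $\Lambda$ logarithmic, the same term returns $\delta^2M\Lambda(t)\log t$, one power of log worse, and Gronwall-type closure gives at best $\|v(t)\|_3\lesssim\epsilon\,t^{C\delta^2}$; since the problem is $L^3$-critical, this polynomial loss destroys not only the claimed decay $t^{-(1-3/p)/2}$ but eventually the smallness needed to control the cubic term, so not even global existence is recovered this way. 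Finally, the dissipative sign is not available for all $k$: testing $(i-k)(2|\phi|^2v+\phi^2\bar{v})$ against $|v|^{p-2}\bar{v}$ gives the upper bound $(\sqrt{1+k^2}-2k)\int|\phi|^2|v|^p$, which is nonpositive only for $k\ge1/\sqrt{3}$ (and $L^p$-energy arguments for the complex diffusion $(\epsilon+i)\Delta$ carry their own restrictions when $\epsilon\ll1$), whereas the proposition claims every $k>0$.

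To be fair, your diagnosis of the obstruction is correct and is a genuine observation that cuts against the paper as well: the paper's justification (``one may proceed as in the (NS) case, since the proof relies only on the decay estimates for the semigroup and on the nature of the nonlinearity'') overlooks precisely this term. For (NS) the wave enters the remainder equation only linearly, giving $\|\phi\|_\infty\|v\|_p\lesssim\delta M s^{-(1-3/(2p))}$, which pairs with the smoothing factor $(t-s)^{-1/2}$ into a convergent Beta function; for (CGL) the cubic nonlinearity produces a coefficient $\|\phi(s)\|_\infty^2\sim\delta^2/s$ that sits exactly at the non-integrable borderline. What is missing, in your write-up and in the paper's sketch alike, is an ingredient giving $\int_0^\infty\|\phi(s)\|_\infty^2 ds<\infty$ (true, for instance, if the profile also lies in $L^1(\real^2)$, since then $\|f(t)\|_\infty$ decays like $t^{-1}$), or a genuinely different treatment of the linear-in-$v$, quadratic-in-$\phi$ interaction; without it, the proof of this proposition is not complete.
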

\section{Acknowledgements}
Mário Figueira was partially supported by Fundação para a Ciência e Tecnologia, through the grant UID/MAT/04561/2013. Simão Correia was also supported by Fundação para a Ciência e Tecnologia, through the grants SFRH/BD/96399/2013 
and UID/MAT/04561/2013.

\small
\noindent \textsc{Sim\~ao Correia}\\
CMAF-CIO and FCUL \\
\noindent Campo Grande, Edif\'icio C6, Piso 2, 1749-016 Lisboa (Portugal)\\
\verb"sfcorreia@fc.ul.pt"\\

\small
\noindent \textsc{Mário Figueira}\\
CMAF-CIO and FCUL \\
\noindent Campo Grande, Edif\'icio C6, Piso 2, 1749-016 Lisboa (Portugal)\\
\verb"msfigueira@fc.ul.pt"\\

\end{document}